\newcommand\brho{\operatorname{\boldsymbol{\rho}}}
\newcommand{\Mod}[1]{\ (\mathrm{mod}\ #1)}
\newcommand\rounddown[1]{\left\lfloor#1\right\rfloor}
\newcommand\mult{\operatorname{\textup{{\fontfamily{ptm}\selectfont mult}}}}
\newcommand\dg{\operatorname{\textup{{\fontfamily{ptm}\selectfont deg}}}}
\providecommand{\dotdiv}{% Don't redefine it if available
  \mathbin{% We want a binary operation
    \vphantom{+}% The same height as a plus or minus
    \text{% Change size in sub/superscripts
      \mathsurround=0pt % To be on the safe side
      \ooalign{% Superimpose the two symbols
        \noalign{\kern-.35ex}% but the dot is raised a bit
        \hidewidth$\smash{\cdot}$\hidewidth\cr % Dot
        \noalign{\kern.35ex}% Backup for vertical alignment
        $-$\cr % Minus
      }%
    }%
  }%
}
\keywords{Latin Square, Embedding, $(g,f)$-factors,  Cruse's Theorem, Andersen-Hoffman's Theorem, Ryser's Theorem, Amalgamation, Detachment}
\title{Ryser's Theorem for Symmetric $\brho$-latin Squares}
\author[1]{Amin Bahmanian}
\author[2]{A. J. W. Hilton}
\affil[1]{%
% Name of Department, Name of Institution, City and / or State, Country
Department of Mathematics, Illinois State University, Normal, IL, U.S.A.

% leave a blank line and put your email here 
\email{mbahman@ilstu.edu}%
}
\affil[2]{%
% Name of Department, Name of Institution, City and / or State, Country
Department of Mathematics, University of Reading, Reading, U.K. 

Department of Mathematics, Queen Mary University of London, London, U.K.

% leave a blank line and write the emails in the author's order
\email{a.j.w.hilton@reading.ac.uk}
}
\begin{document}
%\writedatatofile

\maketitle

% ABSTRACT
% CT papers must include an abstract. The abstract should consist of a
% succinct statement of background followed by a listing of the
% principal new results that are to be found in the paper. The abstract
% should be informative, clear, and as complete as possible. Phrases
% like "we investigate..." or "we study..." should be kept to a minimum
% in favor of "we prove that..."  or "we show that...".  Do not
% include equation numbers, unexpanded citations (such as "[23]"), or
% any other references to things in the paper that are not defined in
% the abstract. The abstract may be distributed without the rest of the
% paper so it must be entirely self-contained.  Try to include all words
% and phrases that someone might search for when looking for your paper.
% You can use some basic LaTeX commands in the abstract, but not any
% user defined macros. 

\begin{abstract}
  Let $L$ be an $n\times n$ array whose  top left $r\times r$ subarray is filled with $k$ different symbols, each occurring at most once in each row and at most once in each column. We establish necessary and sufficient conditions that ensure  the remaining cells of $L$ can be filled  such  that each symbol occurs at most once in each row and at most once in each column, $L$ is symmetric with respect to the main diagonal, and 
each symbol occurs a prescribed number of times in $L$. The case where the prescribed number of times each symbol occurs is $n$ was solved by Cruse  (J. Combin. Theory Ser. A 16 (1974), 18--22), and the case where the top left subarray is $r\times n$ and the symmetry is not required, was settled by 
Goldwasser et al. (J. Combin. Theory Ser. A 130 (2015), 26--41). Our result allows  the entries of the main diagonal to be specified as well, which leads to an extension of the Andersen-Hoffman Theorem (Annals of Disc. Math. 15 (1982) 9--26, European J. Combin. 4 (1983) 33--35). 
\end{abstract}

% TABLE OF CONTENTS, LIST OF FIGURES, LIST OF TABLES
% Please, do not include a table of contents, a list of figures, or a
% list of tables. They will be removed by the editors (and the command
% is actually redefined in the ct.sty file).

%%%%%%%%%%%%%%%%%%%%%%%%%%%%%%%%%%%%%%%%%%%%%%%%%%%
%%%%%%%%%%%%%%%%%%%%%%%%%%%%%%%%%%%%%%%%%%%%%%%%%%%
\section{Introduction} 
Throughout this paper, $n$ and $k$ are positive integers,  $[k]:=\{1,2,\dots, k\}$, and $\vb*{\rho} :=(\rho_1,\dots,\rho_k)$ with $1\leq \rho_\ell\leq n\leq k$ for $\ell \in [k]$ such that $\sum_{\ell \in [k]} \rho_\ell=n^2$. A {\it $\vb*{\rho} $-latin square} $L$ of {\it order} $n$ is an $n\times n$ array filled with $k$ different {\it symbols}, $[k]$, each occurring at most once in each row and at most once in each column, such that each symbol $\ell$ occurs exactly $\rho_\ell$ times in $L$ for $\ell \in [k]$, and $L$ is {\it symmetric}  if $L_{ij}=L_{ji}$ for  $i,j\in [n]$. %$L=L^T$
An $r \times s$ {\it  $\vb*{\rho} $-latin rectangle} on the set $[k]$ of symbols is an $r\times s$ array in which each symbol in $[k]$ occurs at most once in each row and in each column, and in which   each symbol $i$ occurs at most $\rho_\ell$ times  for $\ell \in [k]$. A {\it latin square} (or {\it rectangle}) is a $\vb*{\rho} $-latin square (or {\it rectangle}) with ${\vb*{\rho}} =(n,\dots,n)$.

We are interested in the following problem.
\begin{question} \label{rhosymrysprob}
Let $n> \min \{r,s\}$. Find necessary and sufficient conditions that ensure that a symmetric $r \times s$ $\vb*{\rho} $-latin rectangle $L$ can be extended to a symmetric  $n\times n$ $\vb*{\rho}$-latin square $L'$. 
\end{question}
We remark that even the case $\vb*{\rho} =(n,\dots,n)$ of Problem \ref{rhosymrysprob} is  far from being settled. The following result of Cruse  which can be viewed as an analogue of Ryser's theorem \cite{MR42361} for partial symmetric latin squares,  resolves the case $r=s, \vb*{\rho} =(n,\dots,n)$ of  Problem \ref{rhosymrysprob}.  
For $\ell \in [k]$, let $e_\ell$ be the number of occurrences of  $\ell$ in $L$. 
\begin{theorem} \cite[Theorem 1]{MR329925} \label{crusorigthm}
An $r\times r$ symmetric latin rectangle $L$ on  $[n]$  can be extended to an $n \times n$ symmetric latin square if and only if  
\begin{enumerate} 
\item [\textup{(i)}]  $e_\ell\geq 2r-n$ for  $\ell \in [n]$;
\item [\textup{(ii)}] $\big|\{\ell \in [n] \ |\ e_\ell \equiv n \Mod 2\}\big|\geq r$.
\end{enumerate}
\end{theorem}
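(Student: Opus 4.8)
The plan is to establish necessity by a short counting argument on a hypothetical completed square, and sufficiency by recasting symmetric latin squares as edge-colourings of a loopy complete graph and then applying an amalgamation--detachment argument.

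For necessity, suppose $L$ extends to an $n\times n$ symmetric latin square $L'$. Restricting $L'$ to its first $r$ rows gives an $r\times n$ latin rectangle, in which each symbol $\ell$ occurs exactly $r$ times; since $e_\ell$ of these lie in the top-left block, exactly $r-e_\ell$ lie in rows $1,\dots,r$ and columns $r+1,\dots,n$, and by symmetry the transposed region holds another $r-e_\ell$. Hence the bottom-right $(n-r)\times(n-r)$ block contains $n-e_\ell-2(r-e_\ell)=n-2r+e_\ell$ copies of $\ell$, and non-negativity is exactly (i). For (ii), let $d_\ell$ and $d'_\ell$ count the diagonal cells carrying $\ell$ in $L'$ and in $L$ respectively; off-diagonal occurrences pair up under transposition, so $d_\ell\equiv n$ and $d'_\ell\equiv e_\ell \pmod 2$. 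Setting $d''_\ell=d_\ell-d'_\ell$, the number of $\ell$'s on the $n-r$ new diagonal cells, we get $e_\ell\not\equiv n\pmod 2\iff d''_\ell$ is odd; as $\sum_\ell d''_\ell=n-r$ and each odd value is at least $1$, at most $n-r$ symbols can fail the congruence, which is (ii).

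For sufficiency I would identify a symmetric $n\times n$ latin square on $[n]$ with a proper edge-colouring of the complete graph on $[n]$ carrying one loop at each vertex, the colour of $\{i,j\}$ and of the loop at $i$ being the entries in cells $(i,j)$ and $(i,i)$; the latin condition becomes the requirement that each colour class meet every vertex exactly once, a loop counting once. The given block is such a colouring of the loopy complete graph on $[r]$. Amalgamating the vertices $r+1,\dots,n$ into a single vertex $\alpha$ yields a multigraph $G$ on $[r]\cup\{\alpha\}$ with $n-r$ parallel edges joining $\alpha$ to each $i\in[r]$ and $\binom{n-r}{2}+(n-r)$ loops at $\alpha$. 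The goal is then to colour the edges of $G$ at $\alpha$, extending $L$, so that each colour $\ell$ meets every $i\in[r]$ once and meets $\alpha$ exactly $n-r$ times, after which a detachment theorem splits $\alpha$ into $n-r$ genuine vertices and returns the sought square.

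Conditions (i) and (ii) are exactly what make this balanced colouring feasible. A colour $\ell$ already reaches $e_\ell$ vertices of $[r]$ inside the block, so it must use $r-e_\ell$ edges to $\alpha$ to reach the rest; the remaining $(n-r)-(r-e_\ell)=n-2r+e_\ell$ incidences of $\ell$ at $\alpha$ must come from loops at $\alpha$, and (i) makes this count non-negative. Condition (ii) controls the detachment of those loops: a symbol with $e_\ell\not\equiv n\pmod 2$ is forced onto an odd (hence positive) number of the $n-r$ new diagonal cells, and (ii) guarantees that these parity-constrained symbols number at most $n-r$, so they can be placed on distinct cells. I would therefore first fix the new diagonal, assigning each parity-constrained symbol a distinct cell and filling the remainder in parity-respecting pairs, and then complete the off-diagonal part, where every colour class now pairs up evenly. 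The main obstacle is this final detachment step: one must verify that, with the diagonal fixed, $G$ meets the exact degree and colour hypotheses of an amalgamation--detachment theorem, so that $\alpha$ can be split while simultaneously preserving the latin property at all $n-r$ new vertices and the symmetry of $L'$. This is precisely where the non-negativity furnished by (i) and the loop-parity furnished by (ii) must be turned into the quantitative balance conditions the theorem demands.
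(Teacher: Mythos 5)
Your proposal is correct and follows essentially the same route as the paper: the paper does not prove Cruse's theorem from scratch but derives it (via Remark \ref{remarknodiago1} and Corollaries \ref{cor2nodiag}, \ref{hellcornodiag}) from Theorem \ref{cruserhothm}, whose proof is exactly your amalgamation--detachment scheme, with the necessity counts matching \eqref{easyneccon}--\eqref{congcon2}. The ``main obstacle'' you flag at the end is in fact handled unconditionally by the detachment lemma (Lemma \ref{amalgambahcpc}) once the amalgamated graph is coloured with each colour class of degree exactly $n-r$ at $\alpha$, and in the classical case the bipartite assignment of cross-edges is forced, so the $(g,f)$-factor step of the general proof trivialises just as you describe.
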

Let $\vb*{d}:=(d_1,\dots, d_k)$ be the {\it diagonal tail} of $L'$ where $d_\ell$ is the minimum number of occurrences of $\ell$ in $\{L'_{ii}\ |\ r+1\leq i \leq n\}$ and $\sum_{\ell\in [k]} d_\ell\leq n-r$  (we refer to the diagonal tail as the {\it diagonal} if $r=s=0$). Observe that if $L$ is extended to $L'$, then by permuting rows and columns of $L'$, one can obtain another $\vb*{\rho}$-latin square $L''$ containing $L$ with the same diagonal tail as $L'$ such that the entries of  $L''$ in positions $(i,i)$ for $i>r$ are in a prescribed order. Hence, in order to extend $L$ to $L'$ whose diagonal entries are specified, it is enough to extend  $L$ to $L'$ with a specified diagonal tail. Andersen,  and independently, Hoffman,  proved the following extension of  Cruse's Theorem. 
\begin{theorem} \cite{MR772580, MR694466} \label{rbyrsymprediag}
An $r\times r$ symmetric latin rectangle $L$ on $[n]$ can be extended to an $n \times n$ symmetric latin square with a prescribed diagonal tail $\vb*{d}$ with $\sum_{\ell\in [n]} d_\ell=n-r$ if and only if 
\begin{enumerate}
\item [\textup{(i)}] $e_\ell\geq 2r-n+d_\ell$ for  $\ell \in [n]$;
\item [\textup{(ii)}] $e_\ell+ d_\ell\equiv n \Mod 2$ for  $\ell \in [n]$. 
\end{enumerate}
\end{theorem}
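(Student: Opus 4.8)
The plan is to settle necessity by a direct block count and to obtain sufficiency through the amalgamation--detachment method that underlies the paper. For necessity, suppose $L$ extends to a symmetric latin square $L'$ with diagonal tail $\vb*{d}$. I would split $[n]=\{1,\dots,r\}\cup\{r+1,\dots,n\}$ and thereby partition $L'$ into the pre-filled $r\times r$ block $L$, the off-diagonal blocks $B$ (rows $\le r$, columns $>r$) and $C=B^{\mathsf T}$, and the symmetric $(n-r)\times(n-r)$ block $D$. Fix a symbol $\ell$. Each of the first $r$ rows of $L'$ contains $\ell$ exactly once and $\ell$ occurs $e_\ell$ times inside $L$, so it occurs $r-e_\ell$ times in $B$ and, by symmetry, $r-e_\ell$ times in $C$. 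Since $\ell$ occurs $n$ times in all, it occurs $n-2r+e_\ell$ times in $D$; removing the $d_\ell$ diagonal occurrences leaves $n-2r+e_\ell-d_\ell$ off-diagonal occurrences in $D$, which split into transposed pairs and hence form a non-negative even integer. Non-negativity is precisely (i) and evenness is precisely (ii). Moreover the number of off-diagonal $\ell$-pairs in $D$ equals $m_\ell:=(n-2r+e_\ell-d_\ell)/2$, so (i)--(ii) are exactly the conditions making every $m_\ell$ a non-negative integer, and this identity records the symbol frequencies any extension must meet.

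For sufficiency I would first recast the objects in edge-colouring language. A symmetric $n\times n$ latin square on $[n]$ is equivalent to a proper colouring, by $n$ colours, of $K_n$ with a loop added at every vertex, in which each colour class is a perfect matching where the loop at a vertex is permitted to serve as its match; the loops of colour $\ell$ sit exactly at the diagonal cells carrying $\ell$, and the parity ``diagonal occurrences of $\ell$ $\equiv n\pmod 2$'' is forced because an involution on $n$ points has $\equiv n\pmod 2$ fixed points. The rectangle $L$ prescribes this colouring on the looped $K_r$ spanned by $\{1,\dots,r\}$. I would then amalgamate the vertices $r+1,\dots,n$ into one vertex $\infty$, producing an edge-coloured graph $G$ on $\{1,\dots,r,\infty\}$ in which $i\le r$ is joined to $\infty$ by $n-r$ parallel edges and $\infty$ carries $\binom{n-r}{2}$ edge-loops together with $n-r$ diagonal-loops.

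The colours on the $i$--$\infty$ edges are forced to be the $n-r$ colours missing from row $i$ of $L$ (each once, since no colour may repeat in a row), while the count above dictates the colour frequencies at $\infty$: colour $\ell$ must receive $r-e_\ell$ of the small--$\infty$ edges, $m_\ell$ edge-loops, and $d_\ell$ diagonal-loops, the totals being consistent exactly because $2m_\ell+d_\ell+(r-e_\ell)=n-r$. With these frequencies pinned down the amalgamated colouring is immediate: one checks $\sum_\ell(m_\ell+d_\ell)=\binom{n-r}{2}+(n-r)$, so the loops at $\infty$ may simply be coloured with $\ell$ on $m_\ell+d_\ell$ of them, with $d_\ell$ flagged as diagonal-type. (In the general $\vb*{\rho}$ setting, where these frequencies are not forced, this realization step becomes a genuine degree-constrained-subgraph, i.e.\ $(g,f)$-factor, problem; here it is essentially bookkeeping.)

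The substance then lies in a detachment theorem of Nash-Williams--Hilton--Bahmanian type: I would split $\infty$ back into the $n-r$ vertices $r+1,\dots,n$, sharing each colour class out so evenly that every new vertex is incident to exactly one edge or loop of each colour, with the $d_\ell$ flagged loops distributing one-per-vertex to reproduce the diagonal tail $\vb*{d}$. I expect the main obstacle to be steering the loops through the detachment: each colour-$\ell$ family of loops at $\infty$ must break into precisely $m_\ell$ edges between big vertices and $d_\ell$ surviving loops while creating no repeated edge, and it is exactly the parity condition (ii) --- the integrality of $m_\ell$ --- that permits such a symmetric family to be separated into matched pairs plus fixed points. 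Verifying that $G$ meets the balanced colour-degree hypotheses of the detachment theorem, and that the surviving loops land on the prescribed symbols, is the technical core of the argument.
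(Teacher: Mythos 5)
Your argument is correct and follows essentially the same route as the paper: the paper obtains Theorem \ref{rbyrsymprediag} as the specialization $\rho_1=\dots=\rho_k=n=k$ of Theorem \ref{andhoffrhothm} (via Corollary \ref{cor2diag} and Remark \ref{remarkdiago1}), whose proof is exactly your amalgamation--detachment scheme, with Lemma \ref{amalgambahcpc} doing the splitting of $\infty$ and the same block/parity count giving necessity. Your observation that the $(g,f)$-factor step trivializes in the latin case --- each row $i$ is forced to receive precisely its $n-r$ missing symbols, so the bipartite subgraph $\Theta$ must equal all of $\Gamma$ --- is exactly why the side conditions \eqref{longineqdial1}--\eqref{reallylongineqdial} evaporate in Remark \ref{remarkdiago1}, so the proposal is sound as written.
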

Bryant and Rodger \cite{MR2029313} solved the   case $r\in \{1,2\},s=n, \vb*{\rho} =(n,\dots, n)$ of  Problem \ref{rhosymrysprob}.  Goldwasser et al.  \cite{MR3280683} found necessary and sufficient conditions under which an $r\times n$  $\vb*{\rho} $-latin rectangle can be extended to an $n\times n$ $\vb*{\rho} $-latin square, generalizing a classical result of Hall \cite{MR13111}. Most recently,  Bahmanian \cite{MR4414830} extended this further by establishing  necessary and sufficient conditions that ensure an $r\times s$  $\vb*{\rho} $-latin rectangle can be extended to an $n\times n$ $\vb*{\rho} $-latin square, generalizing Ryser's theorem \cite{MR42361}.
For a survey of results on  embedding of latin squares and related structures, cycle systems and graph designs, we refer the reader to  \cite{MR1275861}.

Let $L$ be an $r\times r$ symmetric $\vb*{\rho} $-latin rectangle. Let $i\in [r]$ be a row of $L$, $\ell\in [k]$ be a symbol, $I\subseteq [r]$ be a subset of rows, and  $K\subseteq [k]$ be a subset of symbols. Then $\mu_K(i)$  and $\mu_I(\ell)$ denote  the number of symbols in $K$ that are missing in row $i$, and the number of rows in $I$ where symbol $\ell$ is missing, respectively. Notice that 
\begin{align*}
    \mu_{[r]} (\ell)&=r-e_\ell, &0\leq \mu_I(\ell)&\leq \min\{|I|, r-e_\ell\} &\forall \ell\in [k], I\subseteq [r],\\
    \mu_{[k]} (i)&=k-r, &0\leq \mu_K(i)&\leq \min\{|K|, k-r\}&\forall i\in [r], K\subseteq [k].
\end{align*}

The complement of a set $S$ is denoted by $\bar S$, and $x \dotdiv y:=\max\{0, x-y\}$. If $x,y,z$ are non-negative, then $(x\dotdiv y)\dotdiv z=(x-y) \dotdiv z$. Whenever it is not ambiguous, we write  $x-y \dotdiv z$ instead of  $(x-y) \dotdiv z$. 

Suppose that an  $r\times r$ symmetric $\vb*{\rho} $-latin rectangle $L$ is extended to an  $n\times n$ symmetric $\vb*{\rho} $-latin square $L'$ with a prescribed diagonal tail $\vb * d$, and let us fix a symbol $\ell \in [k]$. Without loss of generality we assume that $L'_{ii}=\ell$ for $r+1\leq i\leq r+d_\ell$. On the one hand, there are $\rho_\ell-e_\ell-d_\ell$ occurrences of  $\ell$ outside the top left $(r+d_\ell)\times (r+d_\ell)$ subsquare. On the other hand, there are at most $n-r-d_\ell$ occurrences of  $\ell$ in the last $n-r-d_\ell$ rows, and at most $n-r-d_\ell$ occurrences of  $\ell$ in the last $n-r-d_\ell$ columns. Therefore, we must have
\begin{align} \label{easyneccondiag}
\rho_\ell-e_\ell +d_\ell\leq 2(n-r)\quad \forall \ell \in [k].
\end{align}
Due to the symmetry of $L$ and $L'$, off-diagonal entries occur in pairs, and so, if for some $\ell \in [k]$, $\rho_\ell-e_\ell-d_\ell$ is odd,  then $\ell$ must occur at least one more time on the diagonal, and  consequently, 
\begin{align} \label{congcon1diag} 
\big|\{ \ell \in [k] \ | \ \rho_\ell-e_\ell\not\equiv d_\ell \Mod 2  \}\big|\leq n-r-\sum_{\ell\in [k]} d_\ell.
\end{align}
Moreover,
\begin{align} \label{congcon2}
   n-r - \sum_{\ell\in [k]} d_\ell\equiv \sum_{\ell\in [k]} (\rho_\ell-e_\ell-d_\ell)\equiv  \big|\{ \ell \in [k] \ | \ \rho_\ell-e_\ell\not\equiv d_\ell \Mod 2  \}\big| \Mod 2\quad \forall \ell \in [k].
\end{align}
An $r\times r$ symmetric $\vb*{\rho} $-latin rectangle $L$  is {\it $(\vb*{\rho}, \vb*{d})$-admissible} if it meets conditions \eqref{easyneccondiag}--\eqref{congcon2}.  Here is our main result. 
\begin{theorem} \label{andhoffrhothm}
An $r\times r$ symmetric $\vb*{\rho} $-latin rectangle $L$  can be completed to  an $n \times n$ symmetric $\vb*{\rho} $-latin square  with a prescribed diagonal tail $\vb*{d}$ if and only if $L$ is $(\vb*{\rho}, \vb*{d})$-admissible and 
\begin{align}
(n-r)(r-|I|)+\sum_{\ell\in \bar K}\rounddown{\frac{\rho_\ell-e_\ell-d_\ell}{2}} &\geq  \sum_{\ell\in K} \Big( \rho_\ell-e_\ell-n+r \dotdiv \mu_I(\ell) \Big) \nonumber\\ 
  &\quad + \sum_{i\in I} \Big( n-r \dotdiv \mu_{K}(i) \Big) \quad \forall I\subseteq [r], K\subseteq [k]\label{reallylongineqdial}.
\end{align}
\end{theorem}
\begin{remark}
As we shall see in Section \ref{AndHoffhoSec}, Condition \eqref{reallylongineqdial} can be replaced by the following  two conditions. 
\begin{align}
(n-r)|I|&\leq \sum_{\ell\in [k]} \min \Big \{ \rounddown{\frac{\rho_\ell-e_\ell-d_\ell}{2}} ,\mu_I(\ell)\Big\}
 &\forall I\subseteq [r],\label{longineqdial1} \\
\sum_{\ell\in K}(\rho_\ell-e_\ell+r\dotdiv n)&\leq \sum_{i\in [r]} \min \Big \{ n-r, \mu_K(i)\bigg \}
 &\forall K\subseteq [k].\label{longineqdial2} 
\end{align}
\end{remark}
At first sight, it may not be obvious why Theorem \ref{andhoffrhothm} implies Theorems \ref{crusorigthm} and \ref{rbyrsymprediag}; see Remarks \ref{remarkdiago1}. Both Corollaries \ref{cor2diag} and  \ref{hellcordiag} offer  simpler generalizations of Theorems \ref{crusorigthm} and \ref{rbyrsymprediag}. 

Using detachments we reduce the completion of the desired partial $\rho$-latin  square $A$ to finding a subgraph with prescribed degree sequences of an auxiliary  graph associated with $A$. We complete the proof by applying the celebrated Lov\'{a}sz's  $(g,f)$-factor Theorem. Intuitively speaking, we use matching theory to decide which $n-r$ symbols among $k-r$ available symbols of each row should be chosen, and then we use detachment theory to arrange the chosen symbols in such a way that the latin property is maintained. 

Further terminology along with the two main tools  are discussed in Section \ref{termsec}.  Theorems  \ref{andhoffrhothm} is proven in Section \ref{AndHoffhoSec}. We conclude the paper with a few corollaries and  open problems.

\section{Prerequisites} \label{termsec} 
For a graph $G=(V,E)$,   $u\in V, e\in E$, and $K\subseteq V$, $\dg_G(u), \mult_G(e), \mult_G(uS)$ denote the number of edges incident with $u$,  the multiplicity of the edge $e$, and the number of edges between $u$ and $S$,  respectively. If the edges of $G$ are colored with $k$ colors (the set of colors is always $[k]$), then $G(\ell)$ is the color class $\ell$ of $G$ for $\ell \in [k]$. A bigraph $G$ with bipartition $\{X, Y \}$ will be denoted by $G[X, Y ]$,  
and if $S\subseteq X$, then $\bar{S}$ means $X\backslash S$.

For $i=1,2$, an edge that is incident with one vertex only is called an  {\it $i$-loop}
  if  it contributes $i$ to the degree of that vertex. 
If $e$ is a 1-loop incident with vertex $u$, we write $e=u$, and if $e$ is a 2-loop incident with vertex $u$, we write $e=u^2$.  We denote the set of 1-loops of a graph $G$ by $E^{1}(G)$, and $E^{2}(G):=E(G)\backslash E^{1}(G)$. 

Let $\mathbb{K}_n$ denote the $n$-vertex graph in which every pair of distinct vertices are adjacent and each vertex is incident with a $1$-loop. Observe  that $\mult_{\mathbb{K}_n}(u)=\mult_{\mathbb{K}_n}(uv)=1$ for $u,v \in V(\mathbb{K}_n)$ with $u\neq v$. There is a one-to-one correspondence between  symmetric latin squares of order $n$ and 1-factorizations of $\mathbb{K}_n$.

Let $G$ be a graph whose edges are colored, and let $\alpha\in V(G)$. By {\it splitting} $\alpha$ into $\alpha_1,\dots,\alpha_p$, we obtain a new graph $F$ whose vertex set is $\left(V(G)\backslash \{\alpha\}\right) \cup \{\alpha_1,\dots, \alpha_p\}$ so that each edge $\alpha u$ in $G$ becomes $\alpha_i u$ for some $i\in [p]$ in $F$, and each 1-loop $\alpha$ in $G$ becomes $\alpha_i$ for some $i\in [p]$ in $F$. Intuitively speaking, when we {\it split}  a vertex $\alpha$ into $\alpha_1,\dots,\alpha_p$, we share the edges incident with $\alpha$ among $\alpha_1\dots,\alpha_p$.   In this manner, $F$ is a {\it detachment} of $G$, and $G$ is an {\it amalgamation} of $F$ obtained by {\it identifying} $\alpha_1,\dots, \alpha_p$ by $\alpha$. We need the following detachment lemma. Here, $x\approx y$ means $\lfloor y \rfloor \leq x\leq \lceil y \rceil$. 
\begin{lemma}
\label{amalgambahcpc} \cite[Theorem 4.1]{MR2942724}
  Let $G$ be a graph whose edges are colored with $k$  colors, and let $\alpha\in V(G)$. There exists a graph $F$ obtained by splitting $\alpha$ into $\alpha_1,\dots,\alpha_p$ such that
 \begin{enumerate}
     \item [\textup{(i)}] $\dg_{F(\ell)}(\alpha_i)\approx \dg_{G(\ell)}(\alpha)/p$  for  $i\in [p],\ell\in [k]$;
     \item [\textup{(ii)}] $\mult_F(\alpha_i)\approx \mult_G(\alpha)/p$ for  $i\in[p]$;
     \item [\textup{(iii)}] $\mult_F(\alpha_i u)\approx \mult_G(\alpha u)/p$ for  $i\in[p],u\in V(G)\backslash \{\alpha\}$;
     \item  [\textup{(iv)}]  $\mult_F(\alpha_i \alpha_j)\approx \mult_G(\alpha^2)/\binom{p}{2}$ for  $i,j\in[p],i\neq j$.
 \end{enumerate} 
\end{lemma}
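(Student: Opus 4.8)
The plan is to recast the single-vertex detachment as a balanced colouring problem and then invoke an equitable edge-colouring theorem. Observe first that producing $F$ from $G$ amounts to deciding, for each edge of $G$ incident with $\alpha$, where its $\alpha$-end(s) are reattached: a proper edge $\alpha u$ or a $1$-loop at $\alpha$ must choose a single copy $\alpha_i$ with $i\in[p]$, while a $2$-loop $\alpha^2$ must choose an \emph{unordered pair} $\{\alpha_i,\alpha_j\}$ with $i\neq j$ and become the edge $\alpha_i\alpha_j$. I would encode this by assigning to every \emph{end} incident with $\alpha$ a colour from $[p]$, namely the index of the copy it lands on, subject to the single constraint that the two ends of each $2$-loop receive distinct colours. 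Under this encoding the four quantities in the statement become counts of ends of a prescribed colour: $\dg_{F(\ell)}(\alpha_i)$ is the number of ends of original colour $\ell$ assigned colour $i$ (a colour-$\ell$ $2$-loop contributes $1$ to $\alpha_i$ whenever one of its ends is coloured $i$), while $\mult_F(\alpha_i)$, $\mult_F(\alpha_i u)$ and $\mult_F(\alpha_i\alpha_j)$ count such ends among the $1$-loops, the $\alpha u$-edges, and the $2$-loops whose two ends receive the pair $\{i,j\}$. Since the total number of colour-$\ell$ ends at $\alpha$ equals $\dg_{G(\ell)}(\alpha)$, conditions (i)--(iv) translate into the single requirement that this $p$-colouring of the end-set be \emph{balanced} with respect to four partitions simultaneously: the partition by original colour (yielding (i)), the partition isolating the $1$-loops (yielding (ii)), the partition by the other endpoint $u$ (yielding (iii)), and the partition recording which pair of copies a $2$-loop lands on (yielding (iv)).

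Next I would dispose of conditions (ii) and (iii), together with the part of (i) coming from proper edges and $1$-loops, through a standard equitable edge-colouring. Form the bipartite multigraph $B[X,Y]$ with $Y=[k]$, with $X$ consisting of one vertex $x_0$ for the $1$-loops and one vertex $x_u$ for each $u\in V(G)\setminus\{\alpha\}$ having $\mult_G(\alpha u)>0$, and with $\mult_B(x_0\,\ell)$ and $\mult_B(x_u\,\ell)$ equal to the number of colour-$\ell$ $1$-loops and colour-$\ell$ $\alpha u$-edges, respectively. A $p$-edge-colouring of $B$ that is balanced at every vertex (de Werra's equitable edge-colouring theorem) assigns each of these ends a copy in $[p]$; balance at $x_0$ and at each $x_u$ delivers (ii) and (iii), and balance at each $\ell\in Y$ delivers the proper-edge/$1$-loop share of (i). The $2$-loops would then be realized by choosing, for each one, a pair $\{i,j\}$; condition (iv) asks that each of the $\binom{p}{2}$ pairs be used $\approx\mult_G(\alpha^2)/\binom{p}{2}$ times, and the $2$-loop share of (i) asks that, in each colour $\ell$, every copy $\alpha_i$ be incident with $\approx 2t_\ell/p$ of them, where $t_\ell$ is the number of colour-$\ell$ $2$-loops.

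The crux, and the step I expect to be the main obstacle, is that (i) constrains the \emph{total} colour-$\ell$ degree of $\alpha_i$, i.e. the sum of the proper/$1$-loop contribution and the $2$-loop contribution, to lie in the width-one window $\big[\rounddown{\dg_{G(\ell)}(\alpha)/p},\,\roundup{\dg_{G(\ell)}(\alpha)/p}\big]$; balancing the two contributions independently only forces their sum into a window of width two, which is too weak. Consequently the two colourings above cannot be manufactured separately but must arise from a \emph{single} colouring of the whole end-set that is balanced with respect to all four partitions at once, i.e. from a multiply balanced edge-colouring. I would therefore build one auxiliary graph carrying both the colour structure (the vertices $\ell\in[k]$) and the pairing structure of the $2$-loops, verify that the four partitions form a compatible, laminar-like system, and apply the multiply balanced edge-colouring theorem to obtain the required $p$-colouring in one stroke; pushing its balance guarantees back through the end-encoding yields (i)--(iv) simultaneously. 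What remains is purely arithmetic: checking that the ``distinct colours on the two ends of a $2$-loop'' constraint is consistent with the balance windows (automatic once $p\geq 2$, the case $p=1$ being the trivial identity detachment), and confirming that the roundings produced in the auxiliary graph match the floors and ceilings appearing in (i)--(iv).
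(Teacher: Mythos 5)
The paper does not prove this lemma at all: it is quoted verbatim from \cite[Theorem 4.1]{MR2942724}, whose proof is a Nash-Williams--style detachment argument, so the comparison is with that proof. Your encoding of the detachment as a $p$-colouring of the ends at $\alpha$, and your observation that (i) couples the proper-edge/$1$-loop contribution with the $2$-loop contribution (so that balancing the two separately only yields a width-two window), are both correct and are exactly the right diagnosis of where the difficulty sits. But your resolution of that difficulty has a genuine gap: the ``partition recording which pair of copies a $2$-loop lands on'' is not a partition of the end-set that exists in advance --- it is determined only by the very colouring you are trying to construct, so demanding balance with respect to it is circular. Equitable and multiply balanced edge-colouring theorems (de Werra's, and the multiply balanced versions) balance colour counts over \emph{fixed} edge sets at vertices of an auxiliary bipartite graph; none of them controls the joint distribution of the two end-colours of a $2$-loop. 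Concretely, with $p=4$ and $m$ two-loops, a colouring of the $2m$ loop-ends using only the pairs $\{1,2\}$ and $\{3,4\}$, each $m/2$ times, is perfectly balanced on every fixed partition you list (each copy gets $m/2\approx 2m/4$ ends), yet $\mult_F(\alpha_1\alpha_3)=0$ while $\mult_G(\alpha^2)/\binom{4}{2}=m/6$, so (iv) fails arbitrarily badly. The side constraint that the two ends of each $2$-loop receive distinct colours is likewise outside de Werra's framework. Thus the asserted ``apply the multiply balanced edge-colouring theorem in one stroke'' does not exist as stated; the sentence ``verify that the four partitions form a compatible, laminar-like system'' is precisely the unproved crux, and in fact the pair-condition (iv) is not a laminar balance condition at all.

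What the cited proof actually does, and what would repair your plan, is hierarchical rather than one-shot: split off a single vertex $\alpha_1$ from $\alpha$, leaving a residual amalgam $\alpha'$, with $\dg_{F(\ell)}(\alpha_1)\approx\dg_{G(\ell)}(\alpha)/p$, $\mult(\alpha_1\alpha')\approx 2\mult_G(\alpha^2)/p$, $\mult(\alpha'^2)\approx\binom{p-1}{2}\mult_G(\alpha^2)/\binom{p}{2}$, and so on, then induct on $p$. After $\alpha_1$ has been split off it is an ordinary neighbour of the amalgam, so the pair condition (iv) for $\{\alpha_1,\alpha_j\}$ is recovered at a \emph{later} step as an instance of the (iii)-type balance toward an already-detached vertex; no colouring ever has to balance a pairing partition directly. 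The substantive work is then showing that each single-vertex split can be made fair with respect to all the relevant families \emph{simultaneously} (these families, unlike yours, are genuinely laminar --- Nash-Williams's fair-splitting lemma, or an Euler-tour/equitable-colouring argument, applies), and that the width-one errors do not accumulate through the induction. Your bipartite graph $B[X,Y]$ with the vertex $x_0$ for $1$-loops is essentially the right gadget for one such step, but as the engine of a one-shot global colouring it cannot deliver (i) and (iv) together.
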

  Using this detachment lemma, it is easy to construct symmetric $\vb*{\rho} $-latin squares with prescribed diagonals.
\begin{theorem} 
For every  $n,k, \vb*{d} =(d_1,\dots,d_k), \vb*{\rho} =(\rho_1,\dots,\rho_k)$ with $1\leq \rho_1,\dots,\rho_k\leq n\leq k$, $\sum_{\ell \in [k]} \rho_\ell=n^2$ and $\sum_{\ell \in [k]} d_\ell\leq n$,  there exists a symmetric $\vb*{\rho} $-latin square of order $n$ with diagonal  $\vb*{d}$ if and only if 
\begin{align*}
&    \big|\{ \ell \in [k] \ | \ \rho_\ell\not\equiv d_\ell \Mod 2   \}\big| \leq n-\sum_{\ell \in [k]} d_\ell, \\ 
&    \big|\{ \ell \in [k] \ | \ \rho_\ell\not\equiv d_\ell \Mod 2  \}\big| \equiv n-\sum_{\ell \in [k]} d_\ell \Mod 2.
\end{align*}
\end{theorem}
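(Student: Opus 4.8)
The plan is to route both directions through the correspondence between symmetric $\vb*{\rho}$-latin squares of order $n$ and properly edge-colored copies of $\mathbb{K}_n$: a cell with $L_{ij}=L_{ji}=\ell$ and $i\neq j$ corresponds to an edge $ij$ of color $\ell$, while a diagonal cell $L_{ii}=\ell$ corresponds to a $1$-loop at vertex $i$ of color $\ell$. Under this dictionary the latin property is exactly $\dg_{G(\ell)}(v)\le 1$ for every vertex $v$ and color $\ell$, and the number of occurrences of $\ell$ is its total color degree, so $\sum_{v}\dg_{G(\ell)}(v)=\rho_\ell$ (each edge contributes $2$ and each $1$-loop contributes $1$).

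For necessity I would use that off-diagonal occurrences of a fixed symbol come in pairs $L_{ij},L_{ji}$. Writing $c_\ell$ for the number of diagonal cells equal to $\ell$, this gives $\rho_\ell\equiv c_\ell\Mod 2$, while $\sum_{\ell\in[k]}c_\ell=n$ since there are $n$ diagonal cells. If $\rho_\ell$ is odd then $c_\ell\ge 1$, so the number of symbols with $\rho_\ell$ odd is at most $\sum_{\ell}c_\ell=n$, which is the first condition; reducing $\sum_{\ell}c_\ell=n$ modulo $2$ and invoking $\rho_\ell\equiv c_\ell$ yields the congruence condition.

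For sufficiency I would construct the square by amalgamation and detachment. First amalgamate all $n$ vertices of $\mathbb{K}_n$ into a single vertex $\alpha$: the $\binom n2$ edges become $\binom n2$ $2$-loops at $\alpha$ and the $n$ $1$-loops remain $1$-loops, so $\dg(\alpha)=2\binom n2+n=n^2$. The heart of the argument is to color this bouquet with $[k]$ so that color $\ell$ has total degree $\rho_\ell$. If color $\ell$ receives $a_\ell$ $1$-loops and $b_\ell$ $2$-loops, then $\rho_\ell=a_\ell+2b_\ell$, so $a_\ell\equiv\rho_\ell\Mod 2$ and in particular $a_\ell\ge 1$ when $\rho_\ell$ is odd; I also need $\sum_\ell a_\ell=n$ and $\sum_\ell b_\ell=\binom n2$. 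This is precisely where the two hypotheses enter. Starting from the minimal choice $a_\ell=(\rho_\ell\bmod 2)$, the total $\sum_\ell a_\ell$ has the parity of $m:=|\{\ell:\rho_\ell\text{ odd}\}|$ and minimum value $m$, so an assignment with $\sum_\ell a_\ell=n$ exists exactly when $m\le n$ and $m\equiv n\Mod 2$: the remaining $n-m$ units are supplied by repeatedly converting, within a single color, one $2$-loop into two $1$-loops (which keeps $\rho_\ell$ fixed and the parity of $a_\ell$ correct), and this is possible because the number of available $2$-loops is $(n^2-m)/2\ge (n-m)/2$. Once $\sum_\ell a_\ell=n$, the identity $\sum_\ell b_\ell=(n^2-n)/2=\binom n2$ holds automatically.

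Finally I would detach $\alpha$ back into $n$ vertices $\alpha_1,\dots,\alpha_n$ by applying Lemma \ref{amalgambahcpc} with $p=n$. Since $1\le\rho_\ell\le n$, part (i) gives $\dg_{F(\ell)}(\alpha_i)\approx\rho_\ell/n$, hence $\dg_{F(\ell)}(\alpha_i)\le\lceil\rho_\ell/n\rceil=1$, which is the latin property; parts (ii) and (iv) give $\mult_F(\alpha_i)\approx n/n=1$ and $\mult_F(\alpha_i\alpha_j)\approx\binom n2/\binom n2=1$, so the detached graph is exactly $\mathbb{K}_n$, and since detachment preserves total color degrees, color $\ell$ still has degree $\rho_\ell$. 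Reading this colored $\mathbb{K}_n$ through the correspondence produces the desired symmetric $\vb*{\rho}$-latin square. I expect the only genuine work to be the feasibility bookkeeping for the bouquet coloring in the third step; once that assignment is in place, Lemma \ref{amalgambahcpc} does the rest and there is no further obstacle.
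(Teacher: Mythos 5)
Your proposal is correct and follows essentially the same route as the paper: the same parity/counting argument on the diagonal for necessity, and for sufficiency the same construction of a one-vertex bouquet with $n$ $1$-loops and $\binom n2$ $2$-loops, colored so that color $\ell$ has total degree $\rho_\ell$ (your $(a_\ell,b_\ell)$ bookkeeping matches the paper's step of giving one $1$-loop to each odd color and distributing the rest in even batches), followed by Lemma \ref{amalgambahcpc} with $p=n$.
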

\begin{proof}
The proof of necessity is very similar to those of \eqref{congcon1diag} and \eqref{congcon2}.  To prove the sufficiency, let  $G$ be a  graph with $V(G)=\{\alpha\}$, and $\mult_G(\alpha)=n, \mult_G(\alpha^2)=\binom{n}{2}$. First, we color the 1-loops of $G$ such that $\mult_{G(\ell)}(\alpha)=d_\ell$ for $\ell\in [k]$. Then we color an additional 1-loop with $\ell$ if $\rho_\ell-d_\ell$ is odd for $\ell\in [k]$. 
Since $|\{ \ell \in [k] \ | \ \rho_\ell\not\equiv d_\ell \Mod 2   \}| \leq n-\sum_{\ell \in [k]} d_\ell$, this is possible. The number of uncolored 1-loops is  $n-\sum_{\ell \in [k]} d_\ell-|\{ \ell \in [k] \ | \ \rho_\ell\not\equiv d_\ell \Mod 2  \}|$ which is even.  Therefore, we can color the uncolored 1-loops by coloring an even number of 1-loops with each color. Then, we color the remaining edges of $G$ such that  $\mult_{G(\ell)}(\alpha^2)=\big(\rho_\ell-\mult_{G(\ell)}(\alpha)\big)/2$ for $\ell \in [k]$. This is possible, because the coloring of 1-loops ensures $\rho_\ell-\mult_{G(\ell)}(\alpha)$ is even for $\ell \in [k]$, and  
$$
\sum_{\ell\in [k]} \big(\rho_\ell-\mult_{G(\ell)}(\alpha)\big)/2=(n^2-n)/2=\mult_G(\alpha^2).
$$
Applying the detachment lemma with $p=n$ yields the  graph $F\cong \mathbb{K}_n$ whose colored edges correspond to symbols in the desired $\vb*{\rho} $-latin square  $L$. More precisely,  $L$ is obtained by placing symbol $\ell$ in $L_{i j}$ and $L_{ ji}$ whenever the edge $\alpha_i \alpha_j$ is colored $\ell$ for $i\neq j$, and  placing symbol $\ell$ in $L_{i i}$ whenever the 1-loop $\alpha_i$ is colored $\ell$.
\end{proof}

 Let $f,g$ be  integer functions on the vertex set of a graph $G$ such that $0\leq g(x)\leq f(x)$ for all $x$. A {\it $(g,f)$-factor} is a spanning subgraph $F$ of $G$ with the property that $g(x)\leq \dg_F(x)\leq f(x)$ for each $x$.  
Let $G[X,Y]$ be a  bigraph.  By \cite[Theorem 5]{MR3564794} and \cite[Theorem 1]{MR1081839},  $G$ has a  $(g,f)$-factor   if and only if either one of the following two conditions hold.
\begin{align*}
    \sum_{a\in A} g(a)&\leq \sum_{a\in N_G(A)} \min \Big\{ f(a), \mult_G(aA)\Big\}
\quad &\forall A\subseteq X, A\subseteq Y,\\
\sum_{a\in A}f(a)&\geq  \sum_{a\notin A} \Big( g(a) \dotdiv \dg_{G-A}(a) \Big) \quad &\forall A\subseteq X\cup Y.
\end{align*}
Here, $N_G(A)$ is the  neighborhood of $A$ in $G$. We remark that both of these  results are special cases of the  Lov\'{a}sz's  $(g,f)$-factor Theorem \cite{MR325464}.

\section{Proof of Theorem \ref{andhoffrhothm}} \label{AndHoffhoSec}
We established the necessity of \eqref{easyneccondiag}--\eqref{congcon2} in the introduction. The necessity of the remaining conditions will be evident at the end of the proof. To prove the sufficiency, suppose that  $L$ is a $(\vb*{\rho}, \vb*{d})$-admissible $r\times r$ symmetric $\vb*{\rho} $-latin rectangle.  
Let $F=\mathbb K_{n}$ with $V(F)=\tilde X:=\{x_1,\dots,x_n\}$, and let $X=\{x_1,\dots, x_r\}$. In $F$, the 1-loop $x_i$ is colored $\ell$ if $L_{ii}=\ell$ for   $i\in[r]$, and  the edge $x_i x_{j}$ is colored $\ell$ if $L_{ij}=\ell$ for distinct  $i,j\in[r]$. Since $L$ is symmetric, this coloring is well-defined. Observe that some edges of $F$ are uncolored. For $\ell \in [k]$, we color $d_\ell$ arbitrary uncolored 1-loops (incident with the vertices of $\tilde X \backslash X$) with color $\ell$. We have $\dg_{F(\ell)}(u)\leq 1$ for $u\in \tilde X, \ell \in [k]$.   
Let  $G$ be the graph obtained by amalgamating $x_{r+1},\dots, x_n$ of $F$ into a single vertex $\alpha$, so $\mult_G(\alpha)=\mult_G(\alpha x_i)=n-r$ for $ i\in [r]$,  $\mult_G(\alpha^2)=\binom{n-r}{2}$, and  $\mult_{G(\ell)}(\alpha)=d_\ell$ for $\ell \in [k]$.

Let  $\Gamma [X, [k]]$ be the simple bigraph whose edge set is $$\{u \ell\ |\ u\in X, \ell \in [k],  \dg_{F(\ell)}(u)=0\}.$$ 
For  $i\in [r]$,  $\sum_{\ell\in [k]}\dg_{F(\ell)}(x_i)=r$, and so we have
\begin{align}\label{degeqGamma}
 \begin{cases}
\dg_\Gamma(x_i)=k-r & \mbox{if  } i \in [r], \\
\dg_{\Gamma}(\ell)=r-e_\ell& \mbox{if  } \ell \in [k].
\end{cases}
\end{align}

Observe that $L$  can be completed to  an $n \times n$ symmetric $\vb*{\rho} $-latin square   if and only if the uncolored edges of  $F$ can be colored so that
\begin{align}\label{colorcon1cruse}
\forall \ell \in [k], \quad \begin{cases}
\dg_{F(\ell)}(u)\leq 1 & \mbox{if  } u\in \tilde X, \\
|E^1(F(\ell))|+2|E^2(F(\ell))|=\rho_\ell.
\end{cases}
\end{align}

We show that the coloring of  $F$ can be completed such that  \eqref{colorcon1cruse} holds if and only if the coloring of $G$ can be completed such that
\begin{align}\label{colorcon2cruse}
\forall \ell \in [k]\quad \quad  \begin{cases}
\dg_{G(\ell)}(u) \leq 1 & \mbox{if  } u \in X, \\
\dg_{G(\ell)}(\alpha) \leq n-r, \\
|E^1(G(\ell))|+2|E^2(G(\ell))|= \rho_\ell.
\end{cases}
\end{align}
To see this, first assume that the coloring of  $F$ can be completed so that  \eqref{colorcon1cruse} holds. Identifying all the  vertices in $\tilde X\backslash X$   by $\alpha$, we will get the graph $G$ satisfying \eqref{colorcon2cruse}. Conversely, suppose that  we have a  coloring of $G$
such that \eqref{colorcon2cruse} holds. Applying the detachment lemma to $G$, we get a graph $F'$ obtained by splitting $\alpha$ into $\alpha_1,\dots,\alpha_{n-r}$, such that 
 \begin{enumerate}
     \item [\textup{(i)}] $\dg_{F'(\ell)}(\alpha_i)\approx\dg_{G(\ell)}(\alpha)/(n-r)\leq 1$  for  $i\in [n-r],\ell\in [k]$;
     \item [\textup{(ii)}] $\mult_{F'}(\alpha_i)= \mult_G(\alpha)/(n-r)=1$ for  $i\in[n-r]$;
     \item [\textup{(iii)}] $\mult_{F'}(\alpha_i u)= \mult_G(\alpha u)/(n-r)=1$ for  $i\in[n-r],u\in X$;
     \item [\textup{(iv)}]  $\mult_{F'}(\alpha_i \alpha_j)= \mult_G(\alpha^2)/\binom{n-s}{2}=1$ for distinct $i,j\in[n-r]$. 
\end{enumerate}
Since $F'\cong F$ and the coloring of $F'$ satisfies \eqref{colorcon1cruse}, we are done.

Since $L$ is $(\vb*{\rho}, \vb*{d})$-admissible, for $\ell \in [k]$ we have $2(n-r) \geq \rho_\ell - e_\ell+ d_\ell$, and so $\rho_\ell-e_\ell-n+r\leq (\rho_\ell-e_\ell-d_\ell)/2$.  Moreover, $\rho_\ell-e_\ell-n+r\leq r-e_\ell$ for $\ell \in [k]$. We show that the coloring of  $G$ can be completed such that  \eqref{colorcon2cruse} is satisfied if and only if there exists  a subgraph $\Theta$ of $\Gamma$  with $r(n-r)$ edges so that 
\begin{align}\label{colorcon3nodiag}
\begin{cases}
\dg_\Theta (x_i)=n-r & \mbox{if  }  i\in [r], \\
\rho_\ell-e_\ell-n+r\leq \dg_\Theta (\ell)\leq \dfrac{\rho_\ell-e_\ell-d_\ell}{2}  & \mbox{if  } \ell \in [k].
\end{cases}
\end{align}
 \noindent To prove this, suppose that the coloring $G$ can be completed such that  \eqref{colorcon2cruse} is satisfied. Let  $\Theta [X, [k]]$ be the bigraph whose edge set is 
$$\{u \ell\ |\ u\in X, \ell \in [k],\alpha u\in E(G(\ell))  \}.$$ 
It is clear that  $\Theta \subseteq \Gamma$.
For $i\in [r]$, 
 $\dg_\Theta(x_i)=\mult_G(\alpha x_i)=n-r$, and for $\ell \in [k]$,
  \begin{align*}
\rho_\ell&=|E^1(G(\ell))|+2|E^2(G(\ell))|\\&=e_\ell+\mult_{G(\ell)}(\alpha)+2\mult_{G(\ell)}(\alpha, X)+2\mult_{G(\ell)}(\alpha^2)\\
&\geq e_\ell+d_\ell+2\dg_\Theta (\ell).
\end{align*}
Thus,  $\dg_\Theta (\ell)\leq  (\rho_\ell-e_\ell-d_\ell)/2$ for $\ell \in [k]$. Moreover, 
\begin{align*}
    n-r&\geq \dg_{G(\ell)}(\alpha)=\dg_\Theta (\ell)+\mult_{G(\ell)}(\alpha)+2\mult_{G(\ell)}(\alpha^2)\\
        &=\dg_\Theta (\ell)+\mult_{G(\ell)}(\alpha)+\big(\rho_\ell-e_\ell-\mult_{G(\ell)}(\alpha)-2\dg_\Theta (\ell)\big)\\
        &=\rho_\ell-e_\ell-\dg_\Theta (\ell),
\end{align*} 
and so $\dg_\Theta (\ell)\geq \rho_\ell-e_\ell-n+r$. Conversely, suppose that   $\Theta\subseteq \Gamma$ satisfying \eqref{colorcon3nodiag} exists. For each $\ell\in [k]$, if $\ell x_i \in E(\Theta)$ for some $i\in [r]$, we color an $\alpha x_i$-edge in $G$ with $\ell$. Since $\dg_\Theta(x_i)=n-r$ for $i\in [r]$, all the edges between $\alpha$ and $X$ can be colored this way. Since $\Theta$ is simple, $d_{G(\ell)}(u)\leq 1$ for $\ell \in [k]$ and $u
\in X$. Let $O$ be the set of colors $\ell \in [k]$ such that $\rho_\ell-e_\ell-d_\ell$ is odd.  By \eqref{congcon1diag} and \eqref{congcon2}, $(n-r-|O|-\sum_{\ell\in [k]} d_\ell)/2$ is a non-negative integer. By \eqref{colorcon3nodiag}, $(\rho_\ell-e_\ell-d_\ell)/2-\dg_\Theta (\ell)\geq 0$ for $\ell \in [k]\backslash O$ and 
$(\rho_\ell-e_\ell-d_\ell-1)/2-\dg_\Theta (\ell)\geq 0$ for $i\in O$. Since 
\begin{align*}
    \dfrac{n-r-|O|-\sum_{\ell\in [k]} d_\ell}{2}&\leq \dfrac{(n-r)^2-|O|-\sum_{\ell\in [k]} d_\ell}{2}=\frac{n^2-r^2-(n-r)}{2}-r(n-r)-\frac{|O|}{2}\\
&\leq \sum_{\ell \in [k]}\dfrac{\rho_\ell-e_\ell-d_\ell}{2}-\sum_{\ell \in [k]} \dg_\Theta (\ell) - \frac{|O|}{2}\\
&=\sum_{\ell \in [k]\backslash O }\left (\dfrac{\rho_\ell-e_\ell-d_\ell}{2}-\dg_\Theta (\ell)\right) + \sum_{i\in  O}\left (\dfrac{\rho_\ell-e_\ell-d_\ell-1}{2}-\dg_\Theta (\ell)\right),
\end{align*}
there exists a sequence of integers $a_1,\dots, a_k$ such that 
\begin{align*}
\begin{cases}
a_1+\dots+a_k=\dfrac{n-r-|O|-\sum_{\ell\in [k]} d_\ell}{2}, \\
0\leq a_\ell\leq \dfrac{\rho_\ell-e_\ell-d_\ell}{2}-\dg_\Theta (\ell) \quad &\text{ for } \ell \in [k]\backslash O,\\
0\leq a_\ell\leq \dfrac{\rho_\ell-e_\ell-d_\ell-1}{2}-\dg_\Theta (\ell) \quad &\text{ for }  \ell\in O.
\end{cases}
\end{align*}
Now let 
\begin{align*}
\overline{d_\ell}=\begin{cases}
2a_\ell\quad &\text{ for } \ell \in [k]\backslash O,\\
2a_\ell+1 \quad &\text{ for }  \ell\in O.
\end{cases}
\end{align*}
The non-negative sequence $\overline{d_1},\dots,\overline{d_k}$ satisfies the following. 
\begin{align} \label{d_iseqcond}
\begin{cases}
\overline{d_1}+\dots+\overline{d_k}=n-r-\sum_{\ell\in [k]} d_\ell, \\
\overline{d_\ell} \equiv\rho_\ell-e_\ell-d_\ell  \Mod 2 & \text{ for } \ell \in [k],\\
\dg_\Theta (\ell)\leq \dfrac{\rho_\ell-e_\ell-d_\ell-\overline{d_\ell}}{2}& \text{ for } \ell \in [k].
\end{cases}
\end{align}
We color the uncolored  loops of $G$ such that there are $\overline{d_\ell}$ 1-loops colored $\ell$ for $\ell\in [k]$, (so $\mult_{G(\ell)}(\alpha)=d_\ell+\overline{d_\ell}$ for $\ell \in [k]$) and 
$$\mult_{G(\ell)}(\alpha^2) =\frac{1}{2}\left(\rho_\ell-e_\ell-d_\ell-\overline{d_\ell}\right)-\dg_\Theta (\ell)\quad  \forall \ell \in [k].$$
This is possible for \eqref{d_iseqcond} and
  \begin{align*}
\sum_{\ell \in [k]}\left(\rho_\ell-e_\ell-2\dg_\Theta (\ell)-d_\ell-\overline{d_\ell})\right)&=n^2-r^2-2r(n-r)-(n-r)\\
&=2\binom{n-r}{2}=2\mult_G(\alpha^2).
\end{align*}
For $\ell \in [k]$,
  \begin{align*}
|E^1(G(\ell))|+2|E^2(G(\ell))|=e_\ell+2\dg_\Theta (\ell)+\mult_{G(\ell)}(\alpha)+2\mult_{G(\ell)}(\alpha^2)=\rho_\ell.
\end{align*}
Finally, we have the following for $\ell \in [k]$, and so \eqref{colorcon2cruse} holds. 
  \begin{align*}
\dg_{G(\ell)} (\alpha) &=\mult_{G(\ell)} (\alpha, X)+ \mult_{G(\ell)} (\alpha)+ 2\mult_{G(\ell)} (\alpha^2) \\
 &=  \dg_\Theta (\ell)+d_\ell+\overline{d_\ell}+(\rho_\ell-e_\ell-2\dg_\Theta (\ell)-d_\ell-\overline{d_\ell})\\
 &=  \rho_\ell-e_\ell-\dg_\Theta (\ell)\\
& \leq  n-r.
\end{align*}

Let
\begin{align*}
    \begin{cases}
    g,f: V(\Gamma)\rightarrow \mathbb{N}\cup \{0\},\\
    g(u)=f(u)=n-r& \text{ for } u\in X,\\
    g(\ell)=\rho_\ell-e_\ell+r\dotdiv n & \text{ for } \ell \in [k], \\
    f(\ell)=\rounddown{(\rho_\ell-e_\ell-d_\ell)/2}& \text{ for }\ell \in [k].
    \end{cases}
\end{align*}
Clearly, $\Theta\subseteq \Gamma$ exists if and only if  $\Gamma$ has a $(g,f)$-factor, but by \cite[Theorem 5]{MR3564794},   $\Gamma$ has a  $(g,f)$-factor   if and only if the following  conditions hold.

\begin{align*}
\sum_{i\in I} g(i)&\leq \sum_{\ell\in [k]} \min \Big\{ f(\ell), \mult_\Gamma(\ell I)\Big \}
\quad &\forall I\subseteq X,\\
\sum_{\ell\in K}g(\ell)&\leq \sum_{i\in [r]} \min \Big\{ f(i), \mult_\Gamma(i K)\Big\}
\quad &\forall K\subseteq [k].
\end{align*}
Equivalently, we must have
\begin{align*}
(n-r)|I|&\leq \sum_{\ell\in [k]} \min \Big \{ \rounddown{ \frac{\rho_\ell-e_\ell-d_\ell}{2}}, \mu_I(\ell)\Big\}
\quad &\forall I\subseteq [r],\\
\sum_{\ell\in K}(\rho_\ell-e_\ell+r\dotdiv n)&\leq \sum_{i\in [r]} \min \Big \{ n-r, \mu_K(i)\Big \}
\quad &\forall K\subseteq [k].
\end{align*}

By   \cite[Theorem 1]{MR1081839}, $\Gamma$ has a  $(g,f)$-factor   if and only if
$$
\sum_{a\notin  A}f(a)\geq  \sum_{a\in A} \Big( g(a) \dotdiv \dg_{G-\bar A}(a) \Big) \quad \forall A\subseteq X\cup Y,
$$
or equivalently,
\begin{align*}
    \sum_{i\in \bar  I}(n-r)+\sum_{\ell\in\bar  K}\rounddown{ \frac{\rho_\ell-e_\ell-d_\ell}{2}} &\geq  \sum_{\ell\in K} \Big( \rho_\ell-e_\ell-n+r \dotdiv \mu_I(\ell) \Big) \\ & \  + \sum_{i\in I} \Big( n-r \dotdiv \mu_{K}(i) \Big) \quad \forall I\subseteq [r], K\subseteq [k].
\end{align*}

\section{Corollaries and Open Problems} 
 Recall that in order to embed an $r\times r$ symmetric $\vb*{\rho} $-latin rectangle $L$   to to  an $n \times n$ symmetric $\vb*{\rho} $-latin square  with a prescribed diagonal tail $\vb*{d}$, it is necessary that
\begin{align*}  
&e_\ell \geq \rho_\ell+d_\ell+2r-2n\quad \forall \ell \in [k], \text { and}\\
&(n-r-D-q)/2 \text { is a non-negative integer},
\end{align*}
where $D:=\sum_{\ell\in [k]} d_\ell$ and $q$ is the number of symbols $\ell\in [k]$ such that
$\rho_\ell+e_\ell+d_\ell$ is odd. 
We show that imposing slightly stronger assumptions will lead to much simpler conditions
than those of Theorem \ref{andhoffrhothm}. In our first application of Theorem \ref{andhoffrhothm}, we assume that  $e_\ell\geq \rho_\ell+r-n$ for  $\ell \in [k]$; this   in particular implies that $k\geq n+r$ for 
$$k(n-r)=\sum_{\ell\in [k]} (n-r)\geq \sum_{\ell\in [k]} (\rho_\ell-e_\ell)=(n+r)(n-r).$$ 

\begin{corollary} \label{cor1diag}
An $r\times r$ symmetric $\vb*{\rho} $-latin rectangle with $e_\ell\geq \rho_\ell+r-n$ for  $\ell \in [k]$ can be embedded in  an $n \times n$ symmetric $\vb*{\rho} $-latin square  with a prescribed diagonal tail $\vb*{d}$ if and only if  $(n-r-D-q)/2$ is a non-negative integer,  and   any of the following conditions is satisfied.
\begin{align*}
(n-r)|I|&\leq \sum_{\ell\in [k]} \min \Big \{  \rounddown{\frac{\rho_\ell-e_\ell-d_\ell}{2}}, \mu_I(\ell)\Big\}
\quad &\forall I\subseteq [r];\\
\sum_{\ell\in K} \rounddown{\frac{\rho_\ell-e_\ell-d_\ell}{2}} &\geq \sum_{i\in [r]} \Big( n-r \dotdiv \mu_{\bar K}(i) \Big) \quad &\forall K\subseteq [k].
\end{align*}
\end{corollary}
\begin{proof} Suppose that  $e_\ell\geq r-n+\rho_\ell$ for  $\ell \in [k]$. Since $d_\ell\leq n-r$ for $\ell \in [k]$,  \eqref{easyneccondiag} holds. Moreover, $\rho_\ell-e_\ell+r\dotdiv n=0$ for $\ell \in [k]$, and consequently, \eqref{longineqdial2} is satisfied.  Modifying  the proof of    \cite[Theorem 1]{MR1081839}, implies that
the graph $\Gamma[X,[k]]$ of the proof of Theorem \ref{andhoffrhothm} has a  $(g,f)$-factor  with $g(\ell)=0$ for $\ell \in [k]$  if and only if
\begin{equation*}  
    \sum_{\ell\in K}f(\ell)\geq  \sum_{i\in [r]} \Big( g(i) \dotdiv \dg_{G-K}(i) \Big) \quad \forall K\subseteq [k],
\end{equation*}
or equivalently,  
\begin{align*}
     \sum_{\ell\in K} \rounddown{\frac{\rho_\ell-e_\ell-d_\ell}{2}}\geq \sum_{i\in [r]} \Big( n-r \dotdiv \mu_{\bar K}(i) \Big)   \quad \forall K\subseteq [k].
\end{align*}
\end{proof}

In our next corollary, we assume that  $e_\ell\geq 2r+d_\ell-\rho_\ell$ for  $\ell \in [k]$; this   in particular implies that $k\leq (n^2+r^2-n+r)/(2r)$ for 
$$
r^2=\sum_{\ell\in [k]} e_\ell\geq \sum_{\ell\in [k]} (2r+d_\ell-\rho_\ell)=2kr+(n-r)-n^2.
$$
The following offers a  simpler generalization of Theorems \ref{crusorigthm} and \ref{rbyrsymprediag}; see Remarks \ref{remarkdiago1}.
\begin{corollary} \label{cor2diag}
If  $e_\ell\geq 2r+d_\ell-\rho_\ell$ for  $\ell \in [k]$, 
then an $r\times r$ symmetric $\vb*{\rho} $-latin rectangle can be embedded in  an $n \times n$ symmetric $\vb*{\rho} $-latin square  with a prescribed diagonal tail $\vb*{d}$ if and only if  $(n-r-D-q)/2$ is a non-negative integer, and  any of the following conditions is satisfied.
\begin{align*}
\sum_{\ell\in K}(\rho_\ell-e_\ell+r\dotdiv n)&\leq \sum_{i\in [r]} \min \Big \{ n-r, \mu_K(i)\Big \}
 &\forall K\subseteq [k];\\
(n-r)(r-|I|) &\geq  \sum_{\ell \in [k]} \Big( \rho_\ell-e_\ell-n+r \dotdiv \mu_I(\ell) \Big)  &\forall I\subseteq [r].
\end{align*}
\end{corollary}
\begin{proof}
 Suppose that $e_\ell\geq 2r+d_\ell-\rho_\ell$ for  $\ell \in [k]$. Since $\rho_\ell\leq n$ for $\ell \in [k]$,  \eqref{easyneccondiag} is satisfied. Moreover,     $r-e_\ell \leq (\rho_\ell-e_\ell-d_\ell)/2$ for  $\ell \in [k]$.
Thus, the following confirms that  \eqref{longineqdial1} holds. 
\begin{align*}
    (n-r)|I| &\leq (k-r)|I| =\sum_{u\in I} \dg_\Gamma (u)=\sum_{\ell\in N_\Gamma(I)} \mult_\Gamma(\ell I)\\
    &=\sum_{\ell\in N_\Gamma(I)} \min \Big\{\mult_\Gamma(\ell I), \dg_\Gamma(\ell)\Big\}\\
    &\leq   \sum_{\ell\in N_\Gamma(I)} \min \Big\{\mult_\Gamma(\ell I), \rounddown{\frac{\rho_\ell-e_\ell-d_\ell}{2}}\Big\}\\
    &=\sum_{\ell\in [k]} \min \Big \{ \rounddown{\frac{\rho_\ell-e_\ell-d_\ell}{2}} ,\mu_I(\ell)\Big\}
 &\forall I\subseteq [r].
\end{align*}

 Let 
\begin{align*}
    \begin{cases}
    g,f: V(\Gamma)\rightarrow \mathbb{N}\cup \{0\},\\
    g(u)=f(u)=n-r& \text{ for } u\in X,\\
    g(\ell)=\rho_\ell-e_\ell+r\dotdiv n & \text{ for } \ell \in [k], \\
    f(\ell)=\beta& \text{ for }\ell \in [k],
    \end{cases}
\end{align*} 
 where $\beta$ is a sufficiently large number. The graph $\Gamma$ of the proof of Theorem \ref{andhoffrhothm} has a $(g,f)$-factor if and only if 
\begin{align} \label{ineqwhatever}
(n-r)(r-|I|)+\sum_{\ell\in \bar K}\beta &\geq  \sum_{\ell\in K} \Big( \rho_\ell-e_\ell-n+r \dotdiv \mu_I(\ell) \Big) \nonumber\\ 
  &\quad + \sum_{i\in I} \Big( n-r \dotdiv \mu_{K}(i) \Big) & \forall I\subseteq X, K\subseteq [k].
\end{align}
For $K\neq [k]$,  \eqref{ineqwhatever} is trivial, and for $K=[k]$, it simplifies to the following.
\begin{align*}
(n-r)(r-|I|) &\geq  \sum_{\ell \in [k]} \Big( \rho_\ell-e_\ell-n+r \dotdiv \mu_I(\ell) \Big)  + \sum_{i\in I} \Big( n-r \dotdiv \mu_{[k]}(i) \Big) \\
&= \sum_{\ell \in [k]} \Big( \rho_\ell-e_\ell-n+r \dotdiv \mu_I(\ell) \Big)  + \sum_{i\in I} \Big( n-r \dotdiv (k-r) \Big)\\
&=\sum_{\ell \in [k]} \Big( \rho_\ell-e_\ell-n+r \dotdiv \mu_I(\ell) \Big) & \forall I\subseteq [r].
\end{align*}
\end{proof}

\begin{corollary} \label{cor3diag}
If 
\begin{align*}
    2r+d_\ell-e_\ell \leq \rho_\ell \leq n-r+e_\ell \quad \forall\ell \in [k],
\end{align*}
then an $r\times r$ symmetric $\vb*{\rho} $-latin rectangle can be embedded in  an $n \times n$ symmetric $\vb*{\rho} $-latin square  with a prescribed diagonal tail $\vb*{d}$ if and only if  $(n-r-D-q)/2$ is a non-negative integer.
\end{corollary}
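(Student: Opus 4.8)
The plan is to recognize that the two-sided hypothesis of this corollary is precisely the conjunction of the hypotheses of Corollaries~\ref{cor1diag} and~\ref{cor2diag}: the left inequality $2r+d_\ell-e_\ell\le\rho_\ell$ is the same as $e_\ell\ge 2r+d_\ell-\rho_\ell$, the hypothesis of Corollary~\ref{cor2diag}, while the right inequality $\rho_\ell\le n-r+e_\ell$ is the same as $e_\ell\ge r-n+\rho_\ell$, the hypothesis of Corollary~\ref{cor1diag}. Hence either result is available, and I would invoke Corollary~\ref{cor2diag}, whose criterion reads: the embedding exists if and only if $\rho_\ell+e_\ell+d_\ell$ is even for all $\ell$ \emph{and} one of its two listed inequalities holds. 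Since the criterion requires only \emph{one} of the two, it suffices to show that under the additional hypothesis the first of them,
\begin{align*}
(n-r)|I| \ge \sum_{\ell\in[k]}\Big(\rho_\ell-e_\ell-n+r \dotdiv \mu_{\bar I}(\ell)\Big)\qquad\forall I\subseteq[r],
\end{align*}
is automatically satisfied; then Corollary~\ref{cor2diag} collapses to the bare parity requirement, which is exactly what is asserted.

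The key step is to exploit the right-hand hypothesis $\rho_\ell\le n-r+e_\ell$. Rewriting it as $\rho_\ell-e_\ell-n+r\le 0$, and recalling the left-associative reading $\rho_\ell-e_\ell-n+r \dotdiv \mu_{\bar I}(\ell)=\max\{0,(\rho_\ell-e_\ell-n+r)-\mu_{\bar I}(\ell)\}$, I observe that each summand is the maximum of $0$ with a quantity that is at most $\rho_\ell-e_\ell-n+r\le 0$ (using $\mu_{\bar I}(\ell)\ge 0$). Thus every term vanishes, the whole right-hand side equals $0$, and the displayed inequality reduces to $(n-r)|I|\ge 0$, which holds trivially since $n>r$ and $|I|\ge 0$. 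This is the crux of the argument; everything else is bookkeeping.

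Finally, the parity requirement in Corollary~\ref{cor2diag} is verbatim the one asserted here, namely that $\rho_\ell+e_\ell+d_\ell$ be even for all $\ell$, so no translation is needed (it coincides with condition~\eqref{congcon1diag} because $2e_\ell$ is even). I do not expect a genuine obstacle: the only delicate point is the convention for $\dotdiv$, so that the truncation is correctly applied to the already non-positive quantity $\rho_\ell-e_\ell-n+r$ rather than in a way that could flip its sign. One could symmetrically try to route the proof through Corollary~\ref{cor1diag} and trivialize its displayed inequality instead, but that path is messier, since the terms $n-r \dotdiv \mu_{\bar K}(i)$ need not vanish under $e_\ell\ge 2r+d_\ell-\rho_\ell$; the Corollary~\ref{cor2diag} route is the clean one.
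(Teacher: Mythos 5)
Your argument is correct, but it takes a different route from the paper's. You derive Corollary~\ref{cor3diag} from the already-established Corollary~\ref{cor2diag}: the left-hand hypothesis $2r+d_\ell-e_\ell\le\rho_\ell$ puts you in its scope, and the right-hand hypothesis $\rho_\ell-e_\ell-n+r\le 0$ kills every summand $\rho_\ell-e_\ell-n+r\dotdiv\mu_{\bar I}(\ell)$, so its first displayed condition holds vacuously and only the parity requirement survives; the convention $x-y\dotdiv z=\max\{0,x-y-z\}$ is applied correctly. The paper instead goes back to the skeleton of the proof of Theorem~\ref{andhoffrhothm}: it checks \eqref{easyneccondiag} directly (from $\rho_\ell\le n-r+e_\ell$ and $d_\ell\le n-r$) and observes that under the two-sided hypothesis the degree constraints in \eqref{colorcon3} are vacuous --- the lower bound $\rho_\ell-e_\ell-n+r$ is nonpositive and the upper bound $(\rho_\ell-e_\ell-d_\ell)/2$ is at least $r-e_\ell=\dg_\Gamma(\ell)$ --- so the required subgraph $\Theta\subseteq\Gamma$ always exists, with no appeal to any $(g,f)$-factor criterion. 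The paper's route is the more elementary one (it short-circuits the factor machinery entirely), while yours buys economy of verification by reusing Corollary~\ref{cor2diag}, at the cost of routing through the Heinrich--Hell--Kirkpatrick--Liu criterion embedded in that corollary's proof. Both are complete; no gap.
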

\begin{proof}
Since \eqref{easyneccondiag} holds, and the inequality in  \eqref{colorcon3nodiag} is trivial, $\Theta\subseteq \Gamma$ always exists.
\end{proof}

Finally, the following result offers a  very simple generalization of Theorems \ref{crusorigthm} and \ref{rbyrsymprediag}; see Remarks \ref{remarkdiago1}.
 \begin{corollary} \label{hellcordiag}
Suppose that
\begin{align*}
    (k-r)\rounddown{\frac{\rho_\ell-e_\ell-d_\ell}{2}} &\geq (n-r)(r-e_\ell)\geq (k-r)(\rho_\ell-e_\ell-n+r) & \forall \ell \in [k];\\
    (r-e_{\ell'})\rounddown{\frac{\rho_{\ell}-e_{\ell}-d_{\ell}}{2}}&\geq (r-e_{\ell})(\rho_{\ell'}-e_{\ell'}-n+r)  & \forall \ell,\ell'\in [k].
\end{align*}
Then an $r\times r$ symmetric $\vb*{\rho} $-latin rectangle  can be completed to      an $n \times n$ symmetric $\vb*{\rho} $-latin square  with a prescribed diagonal tail $\vb*{d}$ if and only if   $(n-r-D-q)/2$ is a non-negative integer.
\end{corollary}
\begin{proof}
Using the first inequality, we have $(\rho_\ell-e_\ell-d_\ell)/2\geq \rounddown{ (\rho_\ell-e_\ell-d_\ell)/2}\geq \rho_\ell-e_\ell-n+r$
 which implies \eqref{easyneccondiag}. By \cite[Corollary 2]{MR1081839}, if for all pairs of vertices $x,y$ of the bigraph $\Gamma[X,[k]]$, 
 \begin{align} \label{hellkiineq}
    f(x)\deg_{\Gamma}(y)\geq g(y)\deg_{\Gamma}(x),
\end{align}
then $\Gamma$ has a $(g,f)$-factor (and so $\Theta\subseteq \Gamma$ satisfying \eqref{colorcon3nodiag} exists). Recall that $\dg_\Gamma(u)=k-r$, $g(u)=f(u)=n-r$ for $u\in X$, and $\dg_{\Gamma}(\ell)=r-e_\ell$, $g(\ell)=\rho_\ell-e_\ell+r\dotdiv n, f(\ell)=\rounddown{(\rho_\ell-e_\ell-d_\ell)/2}$ for $\ell \in [k]$.  Since for $x,y\in X$ \eqref{hellkiineq} is trivial, the proof is complete. 
\end{proof}

 \begin{remark} \label{remarkdiago1}\textup{
 Both Corollaries \ref{cor2diag} and  \ref{hellcordiag} generalize  the Andersen-Hoffman Theorem as well as Cruse's Theorem. To see this, let $\rho_1=\dots=\rho_k=n=k$. 
Both inequalities in Corollary \ref{hellcordiag} simplify to
$$
\rounddown{\frac{n-e_\ell-d_\ell}{2}}\geq r-e_\ell \quad \forall \ell\in[n],
$$  
which is equivalent to the hypothesis of Corollary \ref{cor2diag}, $e_\ell\geq 2r+d_\ell-n$ for $\ell \in [n]$.
Since
\begin{align*}
\sum_{\ell\in K}(\rho_\ell-e_\ell+r\dotdiv n)&= \sum_{\ell\in K}(n-e_\ell+r\dotdiv n)
= \sum_{\ell\in K}(r-e_\ell)\\
 &=\sum_{i\in [r]}  \mu_K(i)
=\sum_{i\in [r]} \min \Big \{ n-r, \mu_K(i)\Big \} \quad \forall K\subseteq [n],
\end{align*}
\begin{align*}
\sum_{\ell\in [k]} \Big( \rho_\ell-e_\ell-n+r \dotdiv \mu_{\bar I}(\ell) \Big)&=\sum_{\ell\in [k]} \Big( r-e_\ell \dotdiv \mu_{\bar I}(\ell) \Big)=\sum_{\ell\in [k]} \Big( r-e_\ell - \mu_{\bar I}(\ell) \Big)\\
&=
\sum_{\ell\in [n]} \mult_\Gamma (\ell I)=|I|(n-r) \quad \forall I\subseteq [r],
\end{align*}
the long inequalities in Corollary \ref{cor2diag} are trivial. 
Recall that $D=\sum_{\ell\in [n]} d_\ell$ and $q$ is the number of symbols $\ell\in [n]$ such that
$n+e_\ell+d_\ell$ is odd. If $D=n-r$, the  condition that $(n-r-D-q)/2=-q/2$  is a non-negative integer is equivalent to  $q=0$, and consequently, $e_\ell+d_\ell+n$ is even for $\ell\in [n]$. 
 If $D=0$, the  condition that $(n-r-D-q)/2=(n-r-q)/2$  is a non-negative integer is equivalent to  $q\leq n-r$, and consequently, the number of symbols $\ell \in [n]$ such that $e_\ell - n$ is even, is at least $r$.
}\end{remark}

A $\rho$-latin square $L$ is {\it diagonal} if each $\ell \in [k]$ occurs at most once on the diagonal, and is {\it idempotent} if $L_{ii}=i$ for $i\in [n]$. Completing partial idempotent latin squares has a rich history (see \cite {MR2961221}). Theorem \ref{andhoffrhothm} in particular settles the necessary and sufficient conditions that ensure an $r\times r$ symmetric diagonal (or idempotent) $\vb*{\rho} $-latin rectangle $L$ can be embedded in  an $n \times n$ symmetric diagonal (or idempotent) $\vb*{\rho} $-latin square, but the following problem remains  unsolved.
\begin{question} \label{rhoidempprob}
Find necessary and sufficient conditions that ensure that an idempotent $r \times s$ $\vb*{\rho} $-latin rectangle can be extended to an idempotent  $n\times n$ $\vb*{\rho}$-latin square.
\end{question}
We remark that Problem \ref{rhoidempprob} is open even for latin squares \cite{MR2961221}. The most general result up to date   is Rodger's theorem that  settles the problem for latin squares when $r=s$, $n\geq 2r$  \cite{MR0731592,MR0755043}. Two other notable results are  \cite{MR0667240} and \cite{MR4477846}.

Let $\vb*{\rho} =(\rho_1,\dots,\rho_k)$ with $1\leq \rho_\ell\leq n\leq k$ for $\ell \in [k]$ such that $\sum_{\ell \in [k]} \rho_\ell=n^2$. A {\it partial $\vb*{\rho} $-latin square} $L$ of  order $n$ is an $n\times n$ array that is partially filled using $k$ different symbols each occurring at most once in each row and at most once in each column, such that each symbol $i$ occurs at most $\rho_\ell$ times in $L$ for $\ell \in [k]$. We say that $L$ is {\it critical} if it can be extended to exactly one  $\vb*{\rho} $-latin square of order $n$, but removal of any entry of $L$ destroys the uniqueness of the extension, and the number of non-empty cells of $L$ is the {\it size} of the critical partial  $\vb*{\rho} $-latin square. 
\begin{question}
Find  good bounds  for the smallest and largest sizes of critical partial $\vb*{\rho} $-latin squares. 
\end{question}
For latin squares, it is known that the size of the largest critical set is between and $n^2-O(n^{5/3})$ and $n^2/-7n/2+o(n)$
, and it is conjectured that the size of the smallest critical set is $\rounddown{n^2/4}$; see 
\cite[Section 1.8]{MR2246267}.

%%%%%%%%%%%%%%%%%%%%%%%%%%%%%%%%%%%%%%%%%%%%%%%%%%%
%%%%%%%%%%%%%%%%%%%%%%%%%%%%%%%%%%%%%%%%%%%%%%%%%%%

% ACKNOWLEDGEMENTS
% Include acknowledgements to colleagues and referee here.
% Funding and grant support should appear in footnotes on the front page, using the 
% thanks command in the authors command (see above).

\section*{Acknowledgements}

We wish to thank the anonymous referee for  very carefully reading this manuscript and for many suggestions.

%%%%%%%%%%%%%%%%%%%%%%%%%%%%%%%%%%%%%%%%%%%%%%%%%%%
%%%%%%%%%%%%%%%%%%%%%%%%%%%%%%%%%%%%%%%%%%%%%%%%%%%

% BIBLIOGRAPHY
% Pease provide us with a bibtex file for your bibliography
% You can find examples in ct-sample.bib. 
% Please use the correct entrytype 
%     article: any article published in a periodical like a journal article or magazine article
%     book: a book
%     booklet: like a book but without a designated publisher
%     conference: a conference paper
%     inbook: a section or chapter in a book
%     incollection: an article in a collection
%     inproceedings: a conference paper (same as the conference entry type)
%     manual: a technical manual
%     masterthesis: a Masters thesis
%     misc: used if nothing else fits
%     phdthesis: a PhD thesis
%     proceedings: the whole conference proceedings
%     techreport: a technical report, government report or white paper
%     unpublished: a work that has not yet been officially published
%
% When it exists, please add a DOI to your reference using the doi field, this will be shown as a link in the final pdf
% Similarly for preprints, please put the arXiv reference into the eprint field 
% When necessary, you can provide a link through the URL field. Please do not give both the URL and the DOI.
%
% We encourage you to use MathSciNet https://mathscinet.ams.org/mathscinet
% or other equivalent bibliography databases to get full and correct bibliographic entries.

\bibliographystyle{alphaurl}

\end{document}